\documentclass[11pt,a4paper,reqno]{amsart}
\usepackage[english]{babel}
\usepackage[T1]{fontenc}
\usepackage{verbatim}
\usepackage{palatino}
\usepackage{amsmath}
\usepackage{mathabx}
\usepackage{amssymb}
\usepackage{amsthm}
\usepackage{amsfonts}
\usepackage{graphicx}
\usepackage{esint}
\usepackage{xcolor}
\usepackage{mathtools}
\usepackage{overpic}
\usepackage{enumerate}

\usepackage[colorlinks = true, linkcolor={red!80!black}, citecolor = {blue!60!black}]{hyperref}
\pagestyle{headings}
\author{Katrin F\"assler and Tuomas Orponen}
\title[Horizontal and $SL(2)$ lines]{A note on Kakeya sets of horizontal and $SL(2)$ lines}
\address{Department of Mathematics and Statistics\\ University of Jyv\"askyl\"a,
P.O. Box 35 (MaD)\\
FI-40014 University of Jyv\"askyl\"a\\
Finland}  \email{katrin.s.fassler@jyu.fi}
\email{tuomas.t.orponen@jyu.fi}

\date{\today}
\subjclass[2010]{28A78 (primary) 28A80 (secondary)}
\keywords{Kakeya sets, horizontal lines, $SL(2)$ lines, restricted projections}
\thanks{K.F.\ is supported by the Academy of Finland via the project
\emph{Singular integrals, harmonic functions, and boundary
regularity in Heisenberg groups}, grant No.\ 321696. T.O.\ is
 supported by the Academy of Finland via the project \emph{Incidences on Fractals}, grant No.\ 321896. }

\newcommand{\R}{\mathbb{R}}
\newcommand{\W}{\mathbb{W}}
\newcommand{\He}{\mathbb{H}}

\newcommand{\Hd}{\dim_{\mathrm{H}}}

\newcommand{\spa}{\operatorname{span}}

\def\Barint_#1{\mathchoice
          {\mathop{\vrule width 6pt height 3 pt depth -2.5pt
                  \kern -8pt \intop}\nolimits_{#1}}%
          {\mathop{\vrule width 5pt height 3 pt depth -2.6pt
                  \kern -6pt \intop}\nolimits_{#1}}%
          {\mathop{\vrule width 5pt height 3 pt depth -2.6pt
                  \kern -6pt \intop}\nolimits_{#1}}%
          {\mathop{\vrule width 5pt height 3 pt depth -2.6pt
                  \kern -6pt \intop}\nolimits_{#1}}}

\numberwithin{equation}{section}

\theoremstyle{plain}
\newtheorem{thm}[equation]{Theorem}
\newtheorem*{"thm"}{"Theorem"}

\newtheorem{cor}[equation]{Corollary}
\newtheorem{proposition}[equation]{Proposition}

\theoremstyle{definition}

\newtheorem{definition}[equation]{Definition}

\theoremstyle{remark}
\newtheorem{remark}[equation]{Remark}

\addtolength{\hoffset}{-1.15cm}
\addtolength{\textwidth}{2.3cm}
\addtolength{\voffset}{0.45cm}
\addtolength{\textheight}{-0.9cm}

\newcommand{\nref}[1]{(\hyperref[#1]{#1})}

\DeclareMathSymbol{\intop}  {\mathop}{mathx}{"B3}

\begin{document}

\begin{abstract} We consider unions of $SL(2)$ lines in $\R^{3}$. These are lines of the form 
$$L = (a,b,0) + \mathrm{span}(c,d,1),$$
where $ad - bc = 1$. We show that if $\mathcal{L}$ is a Kakeya set of $SL(2)$ lines, then the union $\cup \mathcal{L}$ has Hausdorff dimension $3$. This answers a question of Wang and Zahl.

The $SL(2)$ lines can be identified with \emph{horizontal lines} in the first Heisenberg group, and we obtain the main result as a corollary of a more general statement concerning unions of horizontal lines. This statement is established via a point-line duality principle between horizontal and \emph{conical} lines in $\R^{3}$, combined with recent work on \emph{restricted families of projections to planes}, due to Gan, Guo, Guth, Harris, Maldague, and Wang. 

Our result also has a corollary for Nikodym sets associated with horizontal lines, which answers a special case of a question of Kim.
\end{abstract}

\maketitle

\tableofcontents

\section{Introduction}

The purpose of this note is to study the Hausdorff dimension of unions of \emph{$SL(2)$ lines} in $\R^{3}$. Here is the definition of $SL(2)$ lines, following \cite{WaZh}:
\begin{definition}[$\mathcal{L}_{SL(2)}$]\label{def:SL} The family $\mathcal{L}_{SL(2)}$ consists of the following lines $L \subset \R^{3}$. Either $L$ is a line contained in the $xy$-plane, and $0 \in L$, or then
\begin{displaymath} L := L_{\alpha,\beta,\gamma,\delta} := (\alpha,\beta,0) + \spa(\gamma,\delta,1), \end{displaymath}
where $\alpha \delta - \beta \gamma = 1$.  \end{definition}
 
We also use the following notation. If $\mathcal{L}$ is any family of lines in $\R^{3}$, we write $\mathrm{dir}(\mathcal{L}) := \{e \in S^{2} : \ell \| \spa(e) \text{ for some } \ell \in \mathcal{L}\}$. Here is the main result of the note:
\begin{thm}\label{mainSL} Let $\mathcal{L} \subset \mathcal{L}_{SL(2)}$ be a set with $\mathcal{H}^{2}(\mathrm{dir}(\mathcal{L})) > 0$. Then 
\begin{displaymath} \Hd (\cup \mathcal{L}) = 3. \end{displaymath} \end{thm}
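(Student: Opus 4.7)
The plan is the following: first reduce to horizontal lines in the first Heisenberg group; then install a point-line duality between horizontal lines and "conical" lines in a dual copy of $\R^{3}$; finally, under the assumption that $\cup\calL$ has small Hausdorff dimension, the restricted-projection-to-planes theorem of Gan, Guo, Guth, Harris, Maldague, and Wang (GGGHMW) on the dual side will yield a contradiction.

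\emph{Step 1 (reduction).} The subfamily of $\calL$ consisting of lines inside the $xy$-plane has directions in the great circle $\{z=0\}\subset S^{2}$, a set of vanishing $\calH^{2}$-measure; under the hypothesis on $\mathrm{dir}(\calL)$ we may therefore assume every $L\in\calL$ is of the form $L_{\alpha,\beta,\gamma,\delta}$ with $\alpha\delta-\beta\gamma=1$. Up to an affine rescaling of the $z$-coordinate these are the horizontal lines in $\Hei$.

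\emph{Step 2 (duality).} Coordinatise the 3-manifold of horizontal lines on the chart $\gamma\neq 0$ by $q_{L}=(\gamma,\delta,\alpha)\in\R^{3}$, with $\beta=(\alpha\delta-1)/\gamma$. A direct calculation shows
$$p=(x,y,z)\in L\iff q_{L}\in\ell_{p},\qquad \ell_{p}=\{(\gamma,(1+y\gamma)/x,\,x-z\gamma):\gamma\in\R\},$$
an affine line in the dual $\R^{3}$ with direction proportional to $(x,y,-xz)$ on $\{x\neq 0\}$. After a suitable projective change of coordinates on the dual space, the directions of the family $\{\ell_{p}\}$ sweep out a right circular cone in $S^{2}$; generic 1-parameter subfamilies $\{\ell_{p(\theta)}\}_{\theta\in I}$ then produce $C^{2}$ non-degenerate curves $\gamma(\theta)\in S^{2}$, placing us in the setting of the GGGHMW theorem.

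\emph{Step 3 (dual reformulation and contradiction).} Set $X=\cup\calL$ and $Q=\{q_{L}:L\in\calL\}\subset\R^{3}$. The hypothesis $\calH^{2}(\mathrm{dir}(\calL))>0$ yields $\calH^{2}(\pi_{(\gamma,\delta)}Q)>0$, hence $\Hd(Q)\geq 2$. The incidence $p\in L\iff q_{L}\in\ell_{p}$ gives $Q\subset Y:=\bigcup_{p\in X}\ell_{p}$, and Fubini over the 1-parameter fibration $\{\ell_{p}\}_{p\in X}$ bounds $\Hd(Y)\leq \Hd(X)+1$. Assume, towards contradiction, that $\Hd(X)=s<3$. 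Applied to $Q$ along the non-degenerate conical curve $\gamma(\theta)$, the GGGHMW theorem gives $\Hd(\pi_{\gamma(\theta)^{\perp}}Q)=\min(\Hd(Q),2)=2$ for $\calH^{1}$-a.e.\ $\theta$; since $Q\subset Y$, the same lower bound passes to $\pi_{\gamma(\theta)^{\perp}}Y$. On the other hand, $\pi_{\gamma(\theta)^{\perp}}$ collapses those $\ell_{p}$ that are parallel to $\gamma(\theta)$, and a slicing analysis tracking how these parallel-classes intersect $X$ produces a matching upper bound $\Hd(\pi_{\gamma(\theta)^{\perp}}Y)\leq s-1$. Combining, $2\leq s-1$, i.e.\ $s\geq 3$, a contradiction.

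\emph{Main obstacle.} Step 2 is where the real work lies: exhibiting the precise projective change of coordinates under which the dual lines $\{\ell_{p}\}$ become literal rulings of a cone, verifying $C^{2}$ non-degeneracy of the induced 1-parameter subcurves, and, in Step 3, matching the dual projection $\pi_{\gamma(\theta)^{\perp}}$ applied to $Y$ with a Marstrand slice of $X$ tightly enough to obtain the upper bound $\Hd(\pi_{\gamma(\theta)^{\perp}}Y)\leq s-1$. Once these two matchings are correctly installed the rest of the proof is a combination of classical Marstrand slicing on the primal side and a single application of the GGGHMW planar restricted-projection theorem on the dual side.
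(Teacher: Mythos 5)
Your architecture --- reduce to horizontal lines, dualise points to lines in the parameter space, and invoke the Gan--Guo--Guth--Harris--Maldague--Wang restricted projection theorem --- is exactly the strategy of the paper, and Step 1 together with the bound $\Hd Q\geq 2$ is fine. But both places you flag as ``the real work'' contain genuine errors rather than routine verifications. In Step 2, your chart $q_{L}=(\gamma,\delta,\alpha)$ gives, by your own computation, dual lines $\ell_{p}$ with direction proportional to $(1,y/x,-z)$: this depends on \emph{two} coordinates of $p=(x,y,z)$, so as $p$ varies the directions fill a $2$-dimensional piece of $S^{2}$. No projective change of coordinates on the dual space can compress a $2$-parameter family of directions onto a ($1$-dimensional) conical curve, so there is no non-degenerate curve $\theta\mapsto\gamma(\theta)$ to feed into the GGGHMW theorem. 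The chart that works is not projective in yours: after $\Xi(x,y,t)=(x,y,t/2)$ and $(a,b,c)=(\gamma/\delta,1/\delta,-\beta/(2\delta))$ one gets $\ell_{(a,b,c)}=\{(as+b,s,\tfrac{b}{2}s+c)\}$, for which the incidence map $(a,b,c)\mapsto(ay+b,\tfrac{b}{2}y+c)$ is \emph{linear} with kernel $\spa(1,-y,y^{2}/2)$. The essential feature is not merely that the dual directions lie on a cone, but that the direction of $\ell_{p}$ depends only on the single coordinate $y$ of $p$, so that the parallel classes of dual lines correspond to the foliation of the primal space by the planes $\mathbb{W}_{y}=\{(x,y,t):x,t\in\R\}$; your relation $\beta=(\alpha\delta-1)/\gamma$ makes the analogous map nonlinear and destroys this structure.

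In Step 3 the inequality $\Hd(\pi_{\gamma(\theta)^{\perp}}Y)\leq s-1$ is false: the projection collapses only those $\ell_{p}$ parallel to $\gamma(\theta)$, while every other $\ell_{p}$, $p\in X$, projects to a full line, so $\pi_{\gamma(\theta)^{\perp}}(Y)$ contains a large union of lines in the plane and is typically $2$-dimensional whatever $s$ is. The correct move is to project $Q$ itself, not $Y$: in the good coordinates $\pi_{e_{y}^{\perp}}(Q)$ is a linear isomorphic copy of the slice $(\cup\calL)\cap\mathbb{W}_{y}$, because two line-parameters differ by a multiple of $e_{y}=(1,-y,y^{2}/2)$ exactly when the corresponding lines meet $\mathbb{W}_{y}$ at the same point. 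Then GGGHMW gives $\Hd\pi_{e_{y}^{\perp}}(Q)=\min\{\Hd Q,2\}=2$ for a.e.\ $y$, and the Fubini inequality for Hausdorff dimension upgrades ``a.e.\ slice is $2$-dimensional'' directly to $\Hd(\cup\calL)\geq 3$; no contradiction argument and no upper bound on any projection of $Y$ is needed. As written, your final chain rests on an inequality that fails, so the proof does not close.
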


\begin{remark} Theorem \ref{mainSL} answers a question posed by Wang and Zahl in \cite[Section 1.2]{WaZh}. This question was motivated by earlier work of Katz and Zahl \cite{MR3868003}. Theorem \ref{mainSL} continues to hold if the full lines in $\mathcal{L}$ are replaced by line segments of positive length. We will discuss this briefly below \eqref{form41}. 

We have been informed that Katz, Wu, and Zahl have also proved Theorem \ref{mainSL} independently, using a different method. \end{remark}

The $SL(2)$ lines are essentially (up to a change in coordinates) the same as \emph{horizontal lines in the first Heisenberg group $\He = (\R^{3},\ast)$}, viewed as subsets of $\R^{3}$ (see Proposition \ref{prop1}). We will infer Theorem \ref{mainSL} from a more general statement concerning unions of these horizontal lines, Theorem \ref{main} below. We first need to define the concepts properly.

The family of all horizontal lines is denoted $\mathcal{L}(\He)$. The "Heisenberg" definition of these lines is the following. Let $\Pi_{0} := \{(x,y,0) : x,y \in \R\}$ be the $xy$-plane, and for $p \in \R^{3}$, let $\Pi_{p} := p \ast H_{0}$ be the \emph{left translate} of $\Pi_{0}$ by the Heisenberg group product
\begin{displaymath} (x,y,t) \ast (x',y',t') = \left( x + x',y + y',t + t' + \tfrac{1}{2}(xy' - x'y) \right). \end{displaymath}
Then, $\mathcal{L}(\He)$ consists of all the lines in $\Pi_{p}$ (for every $p \in \R^{3}$) which contain the point $p$.

The family $\mathcal{L}(\He)$ is a $3$-dimensional submanifold of the full ($4$-dimensional) family of lines in $\R^{3}$. In fact, the definition above of horizontal lines will not be used in the note: rather, we focus attention on the following parametrised subset of $\mathcal{L}(\He)$:
\begin{displaymath} \mathcal{L}'(\He) = \{\ell_{(a,b,c)} : (a,b,c) \in \R^{3}\}, \end{displaymath}
where 
\begin{displaymath} \ell_{(a,b,c)} = \{(as + b,s,\tfrac{b}{2}s + c) : s \in \R\}. \end{displaymath}
The subset $\mathcal{L}'(\He)$ consists of all elements of $\mathcal{L}(\He)$, except for those contained in some translate of the plane $\W_{0} := \{(x,0,t) : x,t \in \R\}$. By definition, every set $\mathcal{L} \subset \mathcal{L}'(\He)$ can be written as
\begin{displaymath} \mathcal{L} = \ell(P) := \{\ell_{(a,b,c)} : (a,b,c) \in P\} \end{displaymath}
for some set $P \subset \R^{3}$. This identification of $\mathcal{L}'(\He)$ with $\R^{3}$ allows us to transport notions like "Borel set" and "dimension" from $\R^{3}$ to corresponding notions for subsets of $\mathcal{L}'(\He)$: 

\begin{definition} Let $\mathcal{L} = \ell(P) \subset \mathcal{L}'(\He)$. We say that $\mathcal{L}$ is a Borel set if $P \subset \R^{3}$ is a Borel set. We define $\Hd \mathcal{L} := \Hd P$, where "$\Hd P$" refers to the Euclidean Hausdorff dimension of $P \subset \R^{3}$.  \end{definition}

Now we can state our main result about unions of horizontal lines:

\begin{thm}\label{main} Let $\mathcal{L} \subset \mathcal{L}'(\He)$. Then,
\begin{displaymath} \Hd (\cup \mathcal{L}) = \min\{\Hd \mathcal{L} + 1,3\}. \end{displaymath}
Here "$\Hd (\cup \mathcal{L})$" is the Euclidean Hausdorff dimension of the union $\cup \mathcal{L} := \bigcup_{\ell \in \mathcal{L}} \ell$. \end{thm}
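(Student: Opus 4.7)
Write $\mathcal{L} = \ell(P)$ for the Borel set $P \subset \R^{3}$, so $\Hd \mathcal{L} = \Hd P$. The upper bound $\Hd(\cup \mathcal{L}) \leq \Hd P + 1$ is immediate, since $(a,b,c,s) \mapsto (as+b, s, \tfrac{b}{2}s+c)$ is locally Lipschitz from $P \times \R$ onto $\cup \mathcal{L}$; all the content lies in the matching lower bound. The plan is to view $\cup \mathcal{L}$ as a $1$-parameter family of horizontal slices and invoke the restricted projection theorem of Gan, Guo, Guth, Harris, Maldague, and Wang. The definition of $\ell_{(a,b,c)}$ shows that
\begin{displaymath}
(\cup \mathcal{L}) \cap \{y = s\} = \pi_{s}(P) \quad \text{(inside $\R^{2} \times \{s\}$)}, \qquad \pi_{s}(a,b,c) := (as+b, \tfrac{b}{2}s + c),
\end{displaymath}
where $\pi_{s} \colon \R^{3} \to \R^{2}$ is linear with $\ker \pi_{s} = \spa \gamma(s)$ and $\gamma(s) := (1, -s, s^{2}/2)$. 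A short calculation gives $\det[\gamma(s)\mid\gamma'(s)\mid\gamma''(s)] \equiv -1$, so $\gamma$ is a nondegenerate curve in the classical sense; after a bi-Lipschitz linear identification of targets, the $\pi_{s}$ coincide with orthogonal projections onto a nondegenerate $1$-parameter family of planes $\gamma(s)^{\perp}$ in $\R^{3}$, which is exactly the setting of the cited theorem.

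That theorem then supplies $\Hd(\pi_{s}(P)) \geq \min\{\Hd P, 2\}$ for $\mathcal{L}^{1}$-a.e.\ $s \in \R$, together with its Frostman/energy refinement: if $\mu$ is a Borel probability measure on $P$ with finite $t$-energy for some $t < \min\{\Hd P, 2\}$, then $(\pi_{s})_{\ast}\mu$ has finite $t$-energy for a.e.\ $s$. Setting $\Phi(a,b,c,s) := (as+b, s, \tfrac{b}{2}s+c)$ and $\nu := \Phi_{\ast}(\mu \otimes \mathcal{L}^{1}|_{[0,1]})$, the disjointness of the planes $\{y = s\}$ together with a standard kernel computation yields the Fubini-type bound
\begin{displaymath}
I_{t+1}(\nu) \;\lesssim\; \int_{0}^{1} I_{t}\bigl((\pi_{s})_{\ast}\mu\bigr)\,ds,
\end{displaymath}
which is finite once $\mu$ is further restricted, via pigeonhole/Egorov, so that $s \mapsto I_{t}((\pi_{s})_{\ast}\mu)$ is uniformly bounded on a positive-measure subset of $[0,1]$. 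Hence $\Hd(\cup \mathcal{L}) \geq t + 1$; letting $t \uparrow \min\{\Hd P, 2\}$ and recalling the trivial bound $\Hd(\cup \mathcal{L}) \leq 3$ yields the claimed equality $\Hd(\cup \mathcal{L}) = \min\{\Hd \mathcal{L} + 1, 3\}$.

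The main obstacle is to check that the Gan et al.\ theorem is available in the quantitative (Frostman/energy) form required for the above Fubini step, and to arrange the exceptional null-set of $s$ so that the integrand is in fact integrable after a mild restriction of $\mu$. Both points are anticipated in the restricted-projections literature but require care; the nondegeneracy check for $\gamma$ and the passage from a.e.\ slice dimension to the dimension of $\cup \mathcal{L}$ are otherwise routine and form the skeleton of the argument above.
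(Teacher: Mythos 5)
Your proposal is correct and follows essentially the same route as the paper: slice $\cup \mathcal{L}$ by the planes $\{y = s\}$, recognise each slice as (a bi-Lipschitz image of) the projection of $P$ along the nondegenerate curve $\gamma(s) = (1,-s,s^{2}/2)$, and invoke the restricted projection theorem of Gan--Guo--Guth--Harris--Maldague--Wang to get $\Hd \pi_{s}(P) = \min\{\Hd P, 2\}$ for a.e.\ $s$. Your nondegeneracy computation and the identification of the slices agree with the paper's.

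The one place you diverge is the final step, and there you make life harder than necessary. You pass from the a.e.-slice bound to $\Hd(\cup\mathcal{L}) \geq t+1$ via an energy integral $I_{t+1}(\nu) \lesssim \int_{0}^{1} I_{t}((\pi_{s})_{\ast}\mu)\,ds$, which forces you to upgrade the cited projection theorem to a Frostman/energy form and to control the integrand uniformly on a positive-measure set of $s$ --- you correctly flag this as the main obstacle in your write-up. This detour is avoidable: the qualitative a.e.-slice statement already implies the dimension bound by the standard Fubini-type inequality for Hausdorff measures (if a.e.\ slice of a Borel set by a family of parallel planes has dimension $\geq t$, then the set has dimension $\geq t+1$; see \cite[Theorem 5.8]{MR867284} or \cite[Theorem 7.7]{zbMATH01249699}), which is exactly what the paper uses. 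So no quantitative refinement of the Gan et al.\ theorem is needed, and the step you identify as the main difficulty disappears. With that substitution your argument is complete; the Borel reduction you assume at the outset also deserves a sentence (replace $\cup\mathcal{L}$ by a $G_{\delta}$ hull and enlarge $P$ accordingly), but this is routine.
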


The following corollary for horizontal lines is equivalent to Theorem \ref{mainSL}:

\begin{cor}\label{mainCor} Let $\mathcal{L} \subset \mathcal{L}(\He)$ with $\mathcal{H}^{2}(\mathrm{dir}(\mathcal{L})) > 0$. Then,
\begin{displaymath} \Hd (\cup \mathcal{L}) = 3. \end{displaymath}
\end{cor}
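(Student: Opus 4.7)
The plan is to deduce Corollary~\ref{mainCor} from Theorem~\ref{main} by showing that the hypothesis $\mathcal{H}^{2}(\mathrm{dir}(\mathcal{L})) > 0$ forces the parameter set to have Hausdorff dimension at least $2$.

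First I would dispose of the lines in $\mathcal{L}(\He) \setminus \mathcal{L}'(\He)$: by definition these are the horizontal lines contained in some Euclidean translate of $\W_{0} = \{(x,0,t) : x,t \in \R\}$, so their unit directions lie in the great circle $\W_{0} \cap S^{2}$, which is $\mathcal{H}^{2}$-null in $S^{2}$. It follows that $\mathcal{H}^{2}(\mathrm{dir}(\mathcal{L} \cap \mathcal{L}'(\He))) > 0$, and since $\Hd(\cup \mathcal{L}) \geq \Hd(\cup (\mathcal{L} \cap \mathcal{L}'(\He)))$ it suffices to handle the case $\mathcal{L} \subset \mathcal{L}'(\He)$, which we write as $\mathcal{L} = \ell(P)$ for some $P \subset \R^{3}$.

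Next, differentiating the parametrization $\ell_{(a,b,c)} = \{(as+b, s, \tfrac{b}{2}s + c) : s \in \R\}$ in $s$ shows that $\ell_{(a,b,c)}$ has direction parallel to $(a,1,b/2)$. Consequently the direction map $D(a,b,c) = (a,1,b/2)/\|(a,1,b/2)\|$ factors through the projection $\pi(a,b,c) = (a,b)$, and the induced map $(a,b) \mapsto D(a,b,c)$ is a smooth diffeomorphism from $\R^{2}$ onto the open hemisphere $\{v \in S^{2} : v_{2} > 0\}$, in particular bi-Lipschitz on compacta. Thus $\mathcal{H}^{2}(\mathrm{dir}(\mathcal{L})) = \mathcal{H}^{2}(D(P)) > 0$ forces $\mathcal{H}^{2}(\pi(P)) > 0$, which yields $\Hd P \geq \Hd \pi(P) \geq 2$. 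Invoking Theorem~\ref{main} then gives
\begin{displaymath}
\Hd(\cup \mathcal{L}) \;=\; \min\{\Hd \mathcal{L} + 1, 3\} \;=\; \min\{\Hd P + 1, 3\} \;=\; 3,
\end{displaymath}
as desired.

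All of the substantive work lies in Theorem~\ref{main}; the deduction above is routine, and the only mildly delicate point is the verification that horizontal lines outside $\mathcal{L}'(\He)$ contribute a $\mathcal{H}^{2}$-null set of directions, which is immediate from the description of $\mathcal{L}'(\He)$ as those horizontal lines whose direction vector has non-zero $y$-component.
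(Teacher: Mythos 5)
Your proposal is correct and follows essentially the same route as the paper: discard the lines outside $\mathcal{L}'(\He)$ (their directions lie in a great circle, hence are $\mathcal{H}^{2}$-null), observe that the direction of $\ell_{(a,b,c)}$ is determined by $(a,b)$ via a locally bi-Lipschitz map, conclude $\Hd P \geq 2$, and invoke Theorem~\ref{main}. Your explicit justification of the step from $\mathcal{H}^{2}(\mathrm{dir}(\mathcal{L})) > 0$ to $\mathcal{H}^{2}(\{(a,b) : (a,b,c) \in P\}) > 0$ is slightly more detailed than the paper's, but the argument is the same.
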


\begin{remark}\label{rem1} Theorem \ref{main} and Corollary \ref{mainCor} continue to hold if full lines are replaced by line segments of positive length, see the discussion below \eqref{form41}. Thus, if $\mathcal{L} \subset \mathcal{L}'(\He)$, and every line $\ell \in \mathcal{L}$ contains a segment $I(\ell) \subset \ell$ of positive length, then
\begin{equation}\label{form47} \Hd \Big( \bigcup_{\ell \in \mathcal{L}} I(\ell) \Big) =\min\{\Hd \mathcal{L} + 1,3\}. \end{equation}
\end{remark}


\subsection{Nikodym sets associated with horizontal lines}

Theorem \ref{main} easily yields information about the dimension of \emph{Nikodym sets} associated with horizontal lines. A set $N \subset \R^{3}$ is called an \emph{$\mathcal{L}(\He)$-Nikodym set} if for every $p \in \R^{3}$ (or more generally every $p \in \R^{3}$ in a measurable set of positive measure $\Omega \subset \R^{3}$) there exists a line $\ell_{p} \in \mathcal{L}(\He)$ containing $p$ such that $N$ contains a line segment $I_{p} \subset \ell_{p}$ of positive length.

\begin{cor}\label{cor1} Every $\mathcal{L}(\He)$-set $N \subset \R^{3}$ has $\Hd N = 3$. \end{cor}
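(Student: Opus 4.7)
My plan is to deduce the corollary from Theorem~\ref{main} in its segment version (Remark~\ref{rem1}), after separating the two types of horizontal lines in $\mathcal{L}(\He)$. After replacing $N$ by a $G_\delta$ hull of the same Hausdorff dimension, standard measurable selection lets me assume that the assignment $p \mapsto (\ell_p, I_p)$ is Borel on $\Omega$, and a countable decomposition further reduces to the case where $|I_p| \geq \eta$ uniformly for some $\eta > 0$. I then split $\Omega = \Omega_g \sqcup \Omega_b$ according to whether $\ell_p \in \mathcal{L}'(\He)$ or $\ell_p \in \mathcal{L}(\He) \setminus \mathcal{L}'(\He)$; since $|\Omega| > 0$, at least one of the two pieces has positive $3$-D Lebesgue measure, and it suffices to handle each.

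For the main case $|\Omega_g| > 0$, put $\mathcal{L}_g := \{\ell_p : p \in \Omega_g\} \subset \mathcal{L}'(\He)$. Because each $p \in \Omega_g$ lies on $\ell_p$, the union $\cup \mathcal{L}_g$ contains $\Omega_g$, so $\Hd(\cup \mathcal{L}_g) = 3$. Theorem~\ref{main} applied to $\mathcal{L}_g$ therefore forces $\Hd \mathcal{L}_g \geq 2$. For each $\ell \in \mathcal{L}_g$ I select one $p(\ell) \in \Omega_g$ with $\ell_{p(\ell)} = \ell$ and set $I(\ell) := I_{p(\ell)} \subset N$; the segment statement in Remark~\ref{rem1} then gives
\begin{displaymath}
\Hd N \;\geq\; \Hd \Big( \bigcup_{\ell \in \mathcal{L}_g} I(\ell) \Big) \;=\; \min\{ \Hd \mathcal{L}_g + 1,\, 3 \} \;=\; 3.
\end{displaymath}

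For the degenerate case $|\Omega_b| > 0$, a direct computation in the horizontal-plane structure shows that if $p = (x_0, y_0, t_0) \in \Omega_b$, then $\ell_p$ lies in the Euclidean plane $\{y = y_0\}$ with direction $(1, 0, -y_0/2)$, depending only on $y_0$. By Fubini, for a positive-measure set of levels $y_0 \in \R$ the slice $\Omega_b \cap \{y = y_0\}$ has positive $2$-D area; for each such $y_0$ the segments $I_p$ attached to points of this slice all lie in $\{y = y_0\}$, are parallel of length $\geq \eta$, and (by a further Fubini in the plane, via perpendicular projection) cover a set of positive $2$-D area. A final Fubini on $\R^3$ gives $|N|_3 > 0$, whence $\Hd N = 3$ here as well. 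The only substantial step is the main case, handled by a single application of Theorem~\ref{main} and Remark~\ref{rem1}; the degenerate case is elementary, and the measurable-selection preliminaries are routine.
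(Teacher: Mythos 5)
Your proof is correct, but it diverges from the paper's argument at both of its two main junctures, so a comparison is worthwhile. (i) To show that the relevant line family is at least $2$-dimensional, you apply the \emph{equality} in Theorem \ref{main} to the full lines $\mathcal{L}_g$: since $\cup \mathcal{L}_g \supset \Omega_g$ has dimension $3$, the identity $\Hd(\cup\mathcal{L}_g) = \min\{\Hd\mathcal{L}_g+1,3\}$ forces $\Hd \mathcal{L}_g \geq 2$. This is a clean bootstrap, though it leans on the upper-bound half of Theorem \ref{main} (the elementary direction $\Hd(\cup\mathcal{L}) \leq \Hd\mathcal{L}+1$, which the paper states but does not dwell on). The paper instead slices: it picks $y_0$ with $\mathcal{H}^2(\Omega \cap \W_{y_0})>0$ and observes that the Lipschitz map $(a,b,c)\mapsto (ay_0+b,\,y_0,\,\tfrac{b}{2}y_0+c)$ sends the parameter set onto a superset of $\Omega\cap\W_{y_0}$, so the parameters have dimension $\geq 2$; this is self-contained and needs no measurability of $p\mapsto\ell_p$ at all. (ii) For the degenerate lines outside $\mathcal{L}'(\He)$, the paper disposes of them with a single rotation about the $t$-axis (which preserves $\mathcal{L}(\He)$ and converts the bad lines into good ones), whereas you compute that all such lines through points of $\{y=y_0\}$ are the parallel lines of direction $(1,0,-y_0/2)$ in that plane and run an elementary Fubini; both work, and your version even yields the stronger conclusion $|N|>0$ in that case. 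Your measurable-selection and uniform-$\eta$ preliminaries are harmless but largely unnecessary: Hausdorff dimension and Remark \ref{rem1} require no measurability of the selection, and subadditivity of outer measure already gives that one of $\Omega_g,\Omega_b$ has positive outer measure, which suffices throughout.
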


It is well-known that bounds for Kakeya sets yield bounds for Nikodym sets: we only repeat the standard details below for the reader's convenience. For a similar argument in the case of classical Kakeya and Nikodym sets, see \cite[Section 11.3]{MR3617376}.

\begin{proof}[Proof of Corollary \ref{cor1}] We may assume without loss of generality that all the lines $\ell_{p} \in \mathcal{L}(\He)$ appearing in the definition of "$N$" lie in $\mathcal{L}'(\He)$. Namely, if this is true for a positive measure subset of the points $p \in \Omega$, we simply replace $\Omega$ by that subset. If this fails for Lebesgue almost every point $p \in \Omega$, then we apply a rotation $R$ of, say, $10^{\circ}$ around the $t$-axis to the objects $\Omega$, $N$, and the lines $\ell_{p}$, $p \in \Omega$. Rotations around the $t$-axis preserve $\mathcal{L}(\He)$, and the measure and dimension of $\Omega$ and $N$. After this procedure, we moreover have $\ell_{p} \in \mathcal{L}'(\He)$ for a.e. $p \in R(\Omega)$. 

Using Fubini's theorem, start by picking $y_{0} \in \R$ such that $\mathcal{H}^{2}(\Omega \cap \mathbb{W}_{y_{0}}) > 0$. Here $\mathbb{W}_{y} = \{(x,y,t) : x,t \in \R^{3}\}$ for $y \in \R$. By assumption, for every $p = (x,y_{0},t) \in \Omega \cap \mathbb{W}_{y_{0}}$, there exists a line 
\begin{displaymath} \ell_{p} := \ell_{(a(p),b(p),c(p))} \in \mathcal{L}'(\He) \end{displaymath}
containing $p$ such that $N$ contains a line segment $I_{p} \subset \ell_{p}$ of positive length. 

Now, note that the map $(a,b,c) \mapsto \Psi(a,b,c) = (ay_{0} + b,y_{0},\tfrac{b}{2}y_{0} + c)$ is Lipschitz, and 
\begin{displaymath} \Omega \cap \W_{y_{0}} \subset \Psi(\{(a(p),b(p),c(p)) : p \in \Omega \cap \W_{y_{0}}\}). \end{displaymath}
(This is because the lines $\ell_{p}$ contain the points $p \in \Omega \cap \W_{y_{0}}$.) Therefore,
\begin{displaymath} \Hd \{(a(p),b(p),c(p)) : p \in \Omega \cap \W_{y_{0}}\} \geq \Hd (\Omega \cap \W_{y_{0}}) = 2. \end{displaymath}
In particular, the set of lines $\mathcal{L} := \{\ell_{p} : p \in \Omega\} \subset \mathcal{L}'(\He)$ has $\Hd \mathcal{L} \geq 2$ by definition. Therefore, it follows from Theorem \ref{main}, or to be precise \eqref{form47}, that 
\begin{displaymath} \Hd N \geq \Hd \Big(\bigcup_{p \in \Omega} I_{p} \Big) = 3. \end{displaymath}
This completes the proof. \end{proof}

\begin{remark} Nikodym set for "restricted" families of lines were earlier considered by Kim \cite{MR2956810}. Corollary \ref{cor1} answers (a special case of) a question raised on \cite[p. 478]{MR2956810}. We elaborate on this a little further. The paper \cite{MR2956810} considered general families of $2$-planes $p \mapsto \Pi_{\mathfrak{a}}(p) \subset \R^{3}$, where $p \mapsto \mathfrak{a}(p)$ is a non-vanishing measurable vector field, and
\begin{displaymath} p \in \Pi_{\mathfrak{a}}(p) \quad \text{and} \quad \spa (\mathfrak{a}(p)) = \Pi_{\mathfrak{a}}(p)^{\perp}. \end{displaymath}
One can associate Nikodym sets $N \subset \R^{3}$ to such a plane family, as follows: for every $p \in \R^{3}$, the requirement is that there exists a line $\ell_{p} \subset \R^{3}$ satisfying
\begin{displaymath} p \in \ell_{p} \subset \Pi_{\mathfrak{a}}(p), \end{displaymath}
and a non-trivial segment $I_{p} \subset N \cap \ell_{p}$. How small can such a Nikodym set $N \subset \R^{3}$ be? In \cite{MR2956810}, Kim approached the question via maximal function estimates, and his results depend on the properties of the vector field $\mathfrak{a}$. Kim considered vector fields $\mathfrak{a}$ of the form
\begin{displaymath} \mathfrak{a}(p) = (a_{11}p_{1} + a_{21}p_{2},a_{12}p_{1} + a_{22}p_{2},-1), \qquad p = (p_{1},p_{2},p_{3}) \in \R^{3}, \end{displaymath}
and defined the "discriminant" $D_{\mathfrak{a}} = (a_{12} + a_{21})^{2} - 4a_{11}a_{22}$. In \cite[Corollary 1, p. 478]{MR2956810}, it was shown that the dimension of $N$ equals $3$ is if $D_{\mathfrak{a}} \neq 0$. Right after the corollary, the question is raised, what happens in the situation $D_{\mathfrak{a}} = 0$.

Now, recall the definition of horizontal lines $\mathcal{L}(\He)$: these were the lines contained in the planes $\Pi_{p} = p \ast \Pi_{0}$, and passing through $p$. The planes $\Pi_{p}$ fit in the framework of \cite{MR2956810}, choosing $\mathfrak{a}(p) = (-p_{2}/2,p_{1}/2,-1)$, or $(a_{11},a_{12},a_{21},a_{22}) = (0,\tfrac{1}{2},-\tfrac{1}{2},0)$. In particular, $D_{\mathfrak{a}} = 0$. Also, the $\mathcal{L}(\He)$-Nikodym sets defined above Corollary \ref{cor1} are the same as the Nikodym sets of \cite{MR2956810} associated with the planes $\Pi_{p} = p \ast \Pi_{0}$. Thus, Corollary \ref{cor1} covers the special case $(a_{11},a_{12},a_{21},a_{22}) = (0,\tfrac{1}{2},-\tfrac{1}{2},0)$ of the problem raised on \cite[p. 478]{MR2956810}.

\end{remark}

\subsection{Ingredients of the proof} The proof of Theorem \ref{main} is based on two ingredients. The first one is a \emph{point-line duality} between horizontal lines and \emph{conical lines} in $\R^{3}$, namely translates of lines contained in the light cone $\{(x,y,t) : t^{2} = x^{2} + y^{2}\}$. This duality was formalised in our paper \cite{2022arXiv221000458F}, although it was already implicit in the work \cite{MR4439466} of Liu. Using this point-line duality, Kakeya-type problems for horizontal lines can be transformed into projection problems in $\R^{3}$. These projection problems concern "restricted" families of projections to planes in $\R^{3}$. Sharp results for such families were recently established by Gan, Guo, Guth, Harris, Maldague, and Wang \cite{2022arXiv220713844G}. This is the second key component in the proof of Theorem \ref{main}.

\section{Proofs concerning $SL(2)$ lines} 

In this section we formalise the connection between $SL(2)$ lines and horizontal lines. We also deduce our main result, Theorem \ref{mainSL}, from Corollary \ref{mainCor}.

Recall the $SL(2)$ lines from Definition \ref{def:SL}. We write $\mathcal{L}_{SL(2)}'$ for all the lines in $\mathcal{L}_{SL(2)}$, except for the $x$-axis, or lines of the form $L_{\alpha,\beta,\gamma,\delta}$ with $\delta = 0$. The difference between $\mathcal{L}_{SL(2)}$ and $\mathcal{L}_{SL(2)}'$ is the same as the difference between $\mathcal{L}(\He)$ and $\mathcal{L}'(\He)$. Consider the map
\begin{displaymath} \Xi(x,y,t) := (x,y,t/2). \end{displaymath}
We claim that $\Xi$ maps the $SL(2)$ lines to horizontal lines. More precisely:
\begin{proposition}\label{prop1} If $L_{\alpha,\beta,\gamma,\delta} \in \mathcal{L}_{SL(2)}'$ with $\delta \neq 0$ and $\alpha \delta - \beta \gamma = 1$, then
\begin{equation}\label{form45} \Xi(L_{\alpha,\beta,\gamma,\delta}) = \ell_{(a,b,c)} \in \mathcal{L}'(\He), \end{equation}
where 
\begin{displaymath} \begin{cases} a = \gamma/\delta, \\ b = 1/\delta,\\ c = -\beta/(2\delta). \end{cases} \end{displaymath} 
  \end{proposition}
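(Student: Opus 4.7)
The plan is to parametrize the $SL(2)$ line $L_{\alpha,\beta,\gamma,\delta}$ explicitly, apply the map $\Xi$, and then re-parametrize the resulting curve in the form dictated by the definition of $\ell_{(a,b,c)}$, reading off the coefficients $a$, $b$, $c$.

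Concretely, I would start by writing
\begin{displaymath} L_{\alpha,\beta,\gamma,\delta} = \{(\alpha + r\gamma,\, \beta + r\delta,\, r) : r \in \R\}. \end{displaymath}
Applying $\Xi(x,y,t)=(x,y,t/2)$ gives
\begin{displaymath} \Xi(L_{\alpha,\beta,\gamma,\delta}) = \{(\alpha + r\gamma,\, \beta + r\delta,\, r/2) : r \in \R\}. \end{displaymath}
Since $\delta \neq 0$, I would substitute $s := \beta + r\delta$, so that $r = (s-\beta)/\delta$, which lets $s$ play the role of the second coordinate as in $\ell_{(a,b,c)} = \{(as+b,s,\tfrac{b}{2}s+c) : s \in \R\}$.

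The first coordinate then becomes
\begin{displaymath} \alpha + \gamma\frac{s-\beta}{\delta} = \frac{\gamma}{\delta}s + \frac{\alpha\delta - \beta\gamma}{\delta} = \frac{\gamma}{\delta}s + \frac{1}{\delta}, \end{displaymath}
where the crucial step is invoking the $SL(2)$ condition $\alpha\delta - \beta\gamma = 1$ to collapse the constant term to $1/\delta$. This identifies $a = \gamma/\delta$ and $b = 1/\delta$. The third coordinate becomes $r/2 = s/(2\delta) - \beta/(2\delta) = \tfrac{b}{2}s + c$ with $c = -\beta/(2\delta)$, which matches the required form and yields \eqref{form45}.

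There is no real obstacle here: the statement is essentially a direct substitution, with the single nontrivial input being the determinant condition $\alpha\delta - \beta\gamma = 1$, which is exactly what turns the affine constant in the first coordinate into $1/\delta$ so that $\Xi(L_{\alpha,\beta,\gamma,\delta})$ lands in the distinguished parametrized family $\mathcal{L}'(\He)$ rather than merely in $\mathcal{L}(\He)$.
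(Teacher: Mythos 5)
Your proof is correct and is essentially the computation the paper itself carries out (the paper parametrises $L_{\alpha,\beta,\gamma,\delta}(s) = (\alpha,\beta,0)+(s\gamma,s\delta,s)$ and checks that $\Xi(L_{\alpha,\beta,\gamma,\delta}(s)) = \ell_{(a,b,c)}(\beta+s\delta)$, which is exactly your substitution). You correctly identify the determinant condition $\alpha\delta-\beta\gamma=1$ as the one nontrivial input that makes the constant term equal $1/\delta$.
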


\begin{proof} Fix $\alpha,\beta,\gamma,\delta$ with $\delta \neq 0$ and $\alpha \delta - \beta \gamma = 1$. Write $L_{\alpha,\beta,\gamma,\delta}(s) = (\alpha,\beta,0) + (s\gamma,s\delta,s)$. It is a straightforward computation to check that
\begin{displaymath} \Xi(L_{\alpha,\beta,\gamma,\delta}(s)) = \ell_{(a,b,c)}(\beta + s\delta), \qquad s \in \R. \end{displaymath}
Since $\delta \neq 0$ by assumption, this completes the proof. \end{proof}

We are then prepared to prove Theorem \ref{mainSL}.

\begin{proof}[Proof of Theorem \ref{mainSL}] We may assume that $\mathcal{L} \subset \mathcal{L}_{SL(2)}'$, since the directions of the lines in $\mathcal{L}_{SL(2)} \, \setminus \, \mathcal{L}_{SL(2)}'$ are contained in the $\mathcal{H}^{2}$ null set $S^{2} \cap \{(x,0,t) : x,t \in \R\}$. Similarly, we may assume that $\mathcal{L}$ contains no lines in the $xy$-plane; thus every $L \in \mathcal{L}$ has the form $L = L_{\alpha,\beta,\gamma,\delta}$ for some $\alpha,\beta,\gamma,\delta$ with $\delta \neq 0$ and $\alpha \delta - \beta \gamma = 1$.

Since $\mathcal{L} \subset \mathcal{L}_{SL(2)}'$, we infer from Proposition \ref{prop1} that $\Xi(\mathcal{L}) := \{\Xi(\ell) : \ell \in \mathcal{L}\} \subset \mathcal{L}'(\He)$. We claim that
\begin{equation}\label{form40} \mathcal{H}^{2}(\mathrm{dir}(\Xi(\mathcal{L}))) > 0. \end{equation}
According to Corollary \ref{mainCor}, this will imply that 
\begin{displaymath} \Hd (\cup \mathcal{L}) = \Hd \Xi(\cup \mathcal{L}) = \Hd (\cup \Xi(\mathcal{L})) = 3, \end{displaymath}
and complete the proof.

To verify \eqref{form40}, fix $L = L_{\alpha,\beta,\gamma,\delta} \in \mathcal{L}$. Then, by \eqref{form44}, we have $\Xi(L) = \ell_{(a,b,c)}$ with 
\begin{equation}\label{form46} \begin{cases} a = \gamma/\delta, \\ b = 1/\delta. \end{cases} \end{equation} 
Since $\mathcal{H}^{2}(\mathrm{dir}(\mathcal{L})) > 0$, and the direction of $L_{\alpha,\beta,\gamma,\delta} = (\alpha,\beta,0) + \spa(\gamma,\delta,1)$ is determined by $\gamma$ and $\delta$, we know that
\begin{displaymath} \mathcal{H}^{2}(\{(\gamma,\delta) \in \R^{2} : L_{\alpha,\beta,\gamma,\delta} \in \mathcal{L}\}) > 0. \end{displaymath}
It now follows from \eqref{form46} that also
\begin{displaymath} \mathcal{H}^{2}(\{(a,b) \in \R^{2} : \ell_{(a,b,c)} \in \Xi(\mathcal{L})\}) > 0. \end{displaymath} 
Since the direction of $\ell_{(a,b,c)} = (b,0,c) + \spa(a,1,b/2)$ is determined by $(a,b)$, we may now infer that $\mathcal{H}^{2}(\mathrm{dir}(\Xi(\mathcal{L}))) > 0$, as desired.  \end{proof}

\section{Proofs concerning horizontal lines}

We start by proving Theorem \ref{main}.

\begin{proof}[Proof of Theorem \ref{main}] Without loss of generality, we may assume that $\mathcal{L} = \ell(P)$ is a Borel set of lines, that is, $P \subset \R^{3}$ is a Borel set. For the full details of this reduction, see \cite[Section 3]{MR4439466} or \cite[Theorem 7.9]{MR867284}. The idea is that we can first replace $\cup \mathcal{L}$ by a $G_{\delta}$-set $G \supset \cup \mathcal{L}$ without affecting $\Hd (\cup \mathcal{L})$. Then, it is easy to check that the set of parameters $P' := \{p \in \R^{3} : \ell(p) \subset G\}$ is a Borel set with $P' \supset P$, in particular $\Hd P' \geq \Hd P$. Finally, writing $\mathcal{L}' := \ell(P')$, we have
\begin{displaymath} \Hd (\cup \mathcal{L}) = \Hd G \geq \Hd (\cup \mathcal{L}'). \end{displaymath}
So, if the result is known for Borel sets of lines, it follows for $\mathcal{L}$.

Write $\mathcal{L} := \ell(P)$, where $P \subset \R^{3}$ is Borel. Write also
\begin{displaymath} K_{y} := \{(ay + b,\tfrac{b}{2}y + c) : (a,b,c) \in P\}, \qquad y \in \R, \end{displaymath}
and note that $K_{y}$ is a "slice" of $\cup \mathcal{L}$ with the plane $\mathbb{W}_{y} := \{(x,y,t) : x,t \in \R\}$:
\begin{displaymath} (\cup \mathcal{L}) \cap \mathbb{W}_{y} \cong K_{y}, \end{displaymath}
where "$\cong$" refers to isometry. In order to prove that 
\begin{equation}\label{form42} \Hd (\cup \mathcal{L}) \geq \min\{\Hd \mathcal{L} + 1,3\}, \end{equation}
we now claim that
\begin{equation}\label{form41} \Hd K_{y} = \min\{\Hd P,2\} \quad \text{ for a.e. } y \in \R. \end{equation}
If $\mathcal{L}$ consisted of line segments of positive length, and not full lines, then we would have to modify \eqref{form41} as follows: for every $\epsilon > 0$, there exists an interval $I \subset \R$ of positive length such that $\Hd K_{y} \geq \min\{\Hd P - \epsilon,2\}$ for a.e. $y \in I$. This interval would (be chosen to) consist of points $y \in \R$ with the property that the plane $\mathbb{W}_{y}$ intersects a family of segments corresponding to a $(\Hd P - \epsilon)$-dimensional Borel subset $P' \subset P$. We refer the reader to \cite[Section 3]{MR4439466} for a very similar argument.

Clearly \eqref{form42} follows from \eqref{form41} by the "Fubini inequality" for Hausdorff measures (hence dimension), see \cite[Theorem 5.8]{MR867284} or \cite[Theorem 7.7]{zbMATH01249699}. To prove \eqref{form41}, we define
\begin{displaymath} v(y) := (y,1,0) \quad \text{and} \quad w(y) := (0,y/2,1), \qquad y \in \R. \end{displaymath} 
Then, we note that for $y \in \R$ fixed, $K_{y}$ can be expressed as
\begin{displaymath} K_{y} = \{(\langle p, v(y) \rangle,\langle p, w(y) \rangle ) : p \in P\}, \end{displaymath} 
where "$\langle \cdot,\cdot \rangle$" is the Euclidean dot product. This is a "projection" of $P$ to the plane 
\begin{displaymath} V_{y} := \spa(\{v(y),w(y)\}), \end{displaymath}
but not an orthogonal projection, since $\{v(y),w(y)\}$ is not an orthonormal basis of $V_{y}$. Regardless, 
\begin{equation}\label{form43} \Hd K_{y} = \Hd \pi_{V_{y}}(P), \qquad y \in \R, \end{equation}
because $K_{y}$ is an invertible linear image of $\pi_{V_{y}}(P)$. Let us check this carefully. First, $v(y)$ and $w(y)$ are linearly independent, so we may write (for $y \in \R$ fixed)
\begin{displaymath} \begin{cases} e_{1} := \frac{v(y)}{|v(y)|}, \\ e_{2} := \alpha e_{1} + \beta w(y), \end{cases} \end{displaymath}
where $\{e_{1},e_{2}\}$ is an orthonormal basis of $V_{y}$. The vectors $e_{1},e_{2}$ and the coefficients $\alpha \in \R$ and $\beta \in \R \, \setminus \, \{0\}$ depend on $y$, but we suppress this from the notation.

With this notation, we define the invertible linear map $M_{y} \colon \R^{2} \to \R^{2}$,
\begin{displaymath} M_{y}(z_{1},z_{2}) := (|v(y)|z_{1},\tfrac{1}{\beta}z_{2} - \tfrac{\alpha}{\beta} z_{1}), \qquad (z_{1},z_{2}) \in \R^{2}. \end{displaymath}
Then, one may calculate that
\begin{displaymath} M_{y}(\langle p,e_{1} \rangle,\langle p,e_{2} \rangle) = (\langle p,v(y) \rangle,\langle p,w(y) \rangle), \qquad p \in \R^{3}, \, y \in \R. \end{displaymath} 
The left hand side is the $M_{y}$-image of the orthogonal projection $\pi_{V_{y}}(p)$. Therefore, $K_{y}$ is indeed the $M_{y}$-image of $\pi_{V_{y}}(P)$, hence \eqref{form43} holds.

To complete the proof, we claim that
\begin{equation}\label{form44} \Hd \pi_{V_{y}}(P) = \min\{\Hd P,2\} \quad \text{for a.e. } y \in \R. \end{equation}
The idea is that $\{\pi_{V_{y}}\}_{y \in \R}$ is a $1$-parameter family of orthogonal projections to planes in $\R^{3}$ which satisfies the hypotheses of \cite[Corollary 1]{2022arXiv220713844G}.

Which planes are the planes $V_{y}$? Note that
\begin{displaymath} v(y) \times w(y) = (1,-y,y^{2}/2) =: e_{y}. \end{displaymath}
Thus, $V_{y} = e_{y}^{\perp}$. Moreover, the lines $\ell_{y} := \spa(e_{y})$ are all contained in a $45^{\circ}$ rotated copy of the light cone
\begin{displaymath} \mathcal{C} := \{(x,y,t) \in \R^{3} : t^{2} = x^{2} + y^{2}\}, \end{displaymath}
see \cite[Section 2.2]{2022arXiv221000458F} for the details. This implies that the projections $\{\pi_{V_{y}}\}_{y \in \R}$ satisfy the curvature condition \cite[(1)]{2022arXiv220713844G}. In fact, up to the rotation by $45^{\circ}$, this family of projections is precisely the "model example" mentioned just below \cite[(1)]{2022arXiv220713844G}. Therefore, \eqref{form44} follows from \cite[Corollary 1]{2022arXiv220713844G}, and the proof is complete. \end{proof}

We conclude the paper by proving Corollary \ref{mainCor}.

\begin{proof}[Proof of Corollary \ref{mainCor}] First, note that $\mathcal{H}^{2}(\mathrm{dir}(\mathcal{L} \cap \mathcal{L}'(\He))) > 0$. This is because $\mathrm{dir}(\mathcal{L}'(\He))$ contains all the directions on $S^{2}$, except for those contained in the null set $\{(x,0,t) : x,t \in \R\}$. Therefore, we may assume that $\mathcal{L} \subset \mathcal{L}'(\He)$.

Write $\mathcal{L} = \ell(P)$, where $P \subset \R^{3}$. Recall that
\begin{align*} \mathcal{L} = \ell(P) & = \{(as + b,s,\tfrac{b}{2}s + c) : s \in \R, \, (a,b,c) \in P\}\\
& = \{(b,0,c) + \spa(a,1,\tfrac{b}{2}) : (a,b,c) \in P\}. \end{align*}
Since $\mathcal{H}^{2}(\mathrm{dir}(\mathcal{L})) > 0$ by assumption, we see that
\begin{displaymath} \mathcal{H}^{2}(\{(a,\tfrac{b}{2}) : (a,b,c) \in P\}) > 0, \end{displaymath}
and consequently $\Hd P \geq 2$. The claim now follows from Theorem \ref{main}. \end{proof}

\bibliographystyle{plain}
\bibliography{references}
\end{document}